\newcommand{\bea}{\begin{eqnarray}}
\newcommand{\eea}{\end{eqnarray}}
\newcommand{\clb}{\mathcal{B}}
\newcommand{\clh}{\mathcal{H}}
\newcommand{\T}{\mathbb{T}}
\newcommand{\D}{\mathbb{D}}
\newcommand{\C}{\mathbb{C}}
\newcommand{\R}{\mathbb{R}}
\def\textmatrix#1&#2\\#3&#4\\{\bigl({#1 \atop #3}\ {#2 \atop #4}\bigr)}
\def\dispmatrix#1&#2\\#3&#4\\{\left({#1 \atop #3}\ {#2 \atop #4}\right)}
\newcommand{\be}{\begin{equation}}
	\newcommand{\ee}{\end{equation}}
\newcommand{\ben}{\begin{eqnarray*}}
	\newcommand{\een}{\end{eqnarray*}}
\newcommand{\NI}{\noindent}
\newcommand{\bi}{\begin{itemize}}
	\newcommand{\ei}{\end{itemize}}
\newtheorem{Question}{\sc Question}
\theoremstyle{definition}
\newtheorem*{theorem*}{Theorem}
\theoremstyle{plain}
\newtheorem{thm}{Theorem}[section]
\newtheorem{lem}[thm]{Lemma}
\newtheorem{prop}[thm]{Proposition}
\theoremstyle{definition}
\newtheorem{ex}[thm]{Example}
\numberwithin{equation}{section}
\let\phi=\varphi
\begin{document}

\setcounter{page}{1}
	
\title[Invertibility of Bergman Toeplitz operators]{Invertibility of Bergman Toeplitz operators}

\author[Javed]{Mo Javed}
\address{Indian Institute of Technology Roorkee, Department of Mathematics,
		Roorkee-247 667, Uttarakhand,  India}
\email{mo\_j@ma.iitr.ac.in, javediitr07@gmail.com}

\author[Maji]{Amit Maji}
\address{Indian Institute of Technology Roorkee, Department of Mathematics,
		Roorkee-247 667, Uttarakhand,  India}
\email{amit.maji@ma.iitr.ac.in, amit.iitm07@gmail.com ({Corresponding author)}}

\subjclass[2010]{47B35, 47A05, 30H20}

\keywords{Hardy space, Bergman space, Toeplitz operator, Berezin transform}

\begin{abstract}
In this paper, we establish the invertibility of the Berezin transform of the symbol 
as a necessary and sufficient condition for the invertibility of the Toeplitz operator 
on the Bergman space $L^2_a(\D)$. More precisely, if $\phi=cg+d\Bar{g}$, where $c,d\in\C$ and $g\in H^{\infty}(\D)$, the space of all bounded analytic functions, then $T_{\phi}$ is invertible on $L^2_a(\D)$ if and only if $\inf\limits_{z\in\D}\left|\widetilde{\,{\phi}}(z)\right|=\inf\limits_{z\in\D}|\phi(z)|>0$, where $\widetilde{\,{\phi}}$ is the Berezin transform of $\phi$.
\end{abstract}	
\maketitle

\section{Introduction}\label{Sec:Introduction}

Let $\D=\{z\in \C : |z|<1\}$ be the open unit disc and $\T$ be the unit circle in the complex plane $\C$. Let $dA$ denote the normalized Lebesgue area measure on $\D$. The space $L^2(\D, dA)$ stands for the complex Hilbert space of square integrable functions on $\D$ with the inner product defined by
\[
\langle f, g \rangle=\int_{\D}f(z)\overline{g(z)}\,dA(z) \quad (f,g\in L^2(\D,dA)).
\]
The \emph{Bergman space}, denoted by $L^2_a(\D)$, is the closed subspace of all analytic functions on $\D$ in $L^2(\D, dA)$. Moreover, the space $L^2_a(\D)$ is a reproducing kernel Hilbert space with respect to the kernel function $K_z$ given by
\[
K_{z}(w)= K(w,z)= \frac{1}{(1-\Bar{z}w)^2} \quad (z,w \in \D).
\] 
Let $H^2(\D)$ be the Hardy space of analytic functions $f$ on $\D$ such that 
\[
\| f \| =\sup_{0 \leq r <1}\left(\frac{1}{2\pi}\int_0^{2\pi} | f(re^{it})|^2\, dt\right)^{1 \over 2} <\infty.
\]
We denote by $L^{\infty}(\D)$ (or $L^{\infty}(\T)$) the Banach space of essentially bounded functions on $\D$ (or $\T$) equipped with the sup-norm $\|\cdot\|_{\infty}$. For $\phi\in L^{\infty}(\D)$, the Toeplitz operator $T_{\phi}$ on the \emph{Bergman space} is defined by
\[
T_{\phi}f=P(\phi f), \quad (f\in L^2_a(\D)),
\]
where $P$ is the orthogonal projection of $L^2(\D, dA)$ onto $L^2_a(\D)$. For $\phi\in L^{\infty}(\D)$, the Berezin transform of $\phi$ on $\D$, denoted by $\widetilde{\phi}$,  is defined as follows
\[
\widetilde{\phi}(z)=\langle T_{\phi}k_z, k_z\rangle =  \int_{\D}\phi(w)\frac{(1-|z|^2)^2}{\left|1- \bar{z} w\right|^4}\,dA(w) \quad (z \in\D),
\]
where $k_z$ is the normalized reproducing kernel, i.e., $k_z=\dfrac{K_z}{\|K_z\|}$. On the other hand, for $\phi\in L^{\infty}(\T)$, the harmonic extension of the function $\phi$ in $\D$, denoted by $\breve{\phi}$, is given by the Poisson integral formula:
\[
\breve{\phi}(z)=\frac{1}{2\pi}\int_{0}^{2\pi}\phi(e^{it})\frac{1-|z|^2}{\left|1-z e^{-it}\right|^2}\,dt \quad (z \in\D).
\]
It is easy to see that the harmonic extension of a function $\phi\in L^{\infty}(\T)$ in $\D$ coincides with the Berezin transform of the Toeplitz operator $T_{\phi}$ with symbol $\phi$ on the Hardy space $H^2(\D)$ over the unit disc. The main purpose of this note is to study the invertibility criteria for the certain Toeplitz operators on the Bergman space.

One of the key problems in operator theory and function theory is to determine the invertibilty of Toeplitz operators. In this direction, the complete characterization of invertible Toeplitz operators on $H^2(\D)$ was studied by Devinatz \cite{Devinatz-TOEP. OPER. ON H2 SPACES}. Douglas \cite{Douglas-BANACH ALGEBRA TECH. IN THEORY OF TOEP. OPER.} characterized the invertibility of Toeplitz operators on $H^2(\D)$ with continuous symbols by showing that for $\phi\in C(\T)$, the space of continuous functions on $\T$, the Toeplitz operator $T_{\phi}$ is invertible on $H^2(\D)$ if there exists $\delta>0$ such that $|\breve{\phi}(z)|\geq \delta$ for all $z\in\D$. Thereby, Douglas raised the following problem:
\textsf{Let $\phi\in L^{\infty}(\T)$. Suppose there exists a $\delta>0$ such that $|\breve{\phi}(z)|\geq \delta$ for all $z\in\D$. Is the Toeplitz operator $T_{\phi}$ invertible on $H^2(\D)$?}

\NI
Many researchers including Chang and Tolokonnikov \cite{Tolokonnikov-ESTIMATES IN THE CARLESON CORONA THM.} investigated the above problem and obtained a sufficient condition that there exists a positive $\delta_0 ~(<1)$ such that if $\delta_0<|\breve{\phi}(z)|<1$ for all $z\in\D$, then $T_{\phi}$ is invertible on $H^2(\D)$ (see also \cite{NIKOLSKI-TREATISE ON SHIFT}). On the other hand, Wolff \cite{WOLFF-COUNTEREXAMPLES} constructed a function in $L^{\infty}(\T)$ that serves as an elegant counterexample for a negative answer to Douglas's question. Thus, in the Bergman space setting the question reads:

\begin{Question}\label{Q:Douglas question for Bergman space}
\textsf{Is the invertibility of Bergman Toeplitz operator $T_{\phi}$ for $\phi\in L^{\infty}(\D)$ equivalent to the invertibility of the Berezin transform $\widetilde{\phi}$ in $L^{\infty}(\D)$?}
\end{Question}

The above question is incredibly challenging and even for a bounded harmonic function $\phi$ on $\D$, the problem is still unsolved. However, the affirmative answers are available in the literature in the case when $\phi\in L^{\infty}(\D)$ is either `real harmonic', or `non-negative', or `analytic', or `coanalytic'. For a detailed survey, see \cite{LUECKING-INEQUALITIES ON BERGMAN SPACES,McDONALD-SUNDBERG-TOEPLITZ OPER. ON DISC,Zhao_Zheng}. The problem of invertibility for Bergman Toeplitz operators with harmonic symbols had also been explored by Yoneda \cite{Yoneda-ON INVERTIBILITY OF BERGMAN TOEP. OPER.}. He obtained the invertibility of Berezin transform of the harmonic symbol of the form $\phi=cg+d\overline{g}$, where $c,d\in\C$ and $g\in H^{\infty}(\D)$ as a sufficient condition for the invertibility of the Bergman Toeplitz operator $T_{\phi}$. Moreover, for such a $\phi$ if $g\in H^{\infty}(\D)\cap C(\overline{\D})$, then he obtained an affirmative answer to the Question \ref{Q:Douglas question for Bergman space} (see \cite[Theorem 2.6, Theorem 2.8]{Yoneda-ON INVERTIBILITY OF BERGMAN TOEP. OPER.}). For a thorough discussion on Bergman Toeplitz operators, one can see \cite{HAKAN-KORENBLUM-ZHU,KORENBLUM-ZHU-AN APPLICATION OF TAUBERIAN,ZHU-OPER. THEORY IN FUNCTION SPACES}.

In this paper we have obtained an affirmative answer to Question \ref{Q:Douglas question for Bergman space} for harmonic symbols of the form $cg+d\overline{g}$, where $g\in H^{\infty}(\D)$ which relaxes the condition that $g$ be continuous on $\overline{\D}$ unlike Yoneda's result. Along the way, we prove some characterization results in the general setting of Hilbert space. In turn, it precisely extends the results concerning the invertibility of $T_{\phi}$ in terms of the invertibility of the Berezin transform of its symbol when $\phi$ is either real harmonic, analytic, coanalytic, or $T_{\phi}$ is normal provided $\phi$ is bounded harmonic.

The rest of the paper is structured as follows. Section \ref{Sec:Preliminaries} focuses on establishing the necessary background, and along the way, we fix some notations and definitions. In Section 3, we prove the invertibility of the Berezin transform of certain harmonic symbols determines the invertibility of the Bergman Toeplitz operator which extends Yoneda's result.

\section{Preliminaries}\label{Sec:Preliminaries}

This section compiles notations, definitions, and some basic results that are used in this paper. We denote $\clh$ as a Hilbert space over the field of complex numbers 
$\mathbb{C}$ and $\clb(\clh)$ as the $C^*$-algebra of all bounded linear operators on 
$\clh$. A linear operator $T$ is said to be bounded below if there exists $c> 0$ such that $\|Th \| \geq c \| h \|$ for all $h \in \clh$. An operator $T \in \clb(\clh)$ is said to be hyponormal if $T^*T -TT^*$ is a positive operator, equivalently, $\|Th\| \geq \|T^*h\| $ for all $h \in \clh$.

Let $H(\D)$ be a reproducing kernel Hilbert space (for e.g. $H^2(\D)$, $L^2_a(\D)$) on 
$\D$ and the map $K: \D \times \D \rightarrow \mathbb{C} $ 
defined by
\[
K(w,z) = K_z(w)= \langle K_z, K_w \rangle
\]
be the reproducing kernel function of $H(\D)$. We denote $k_z=\dfrac{K_z}{\|K_z\|}$ \
for $z \in \D$ as the normalized reproducing kernel of $H(\D)$ and the set $\{{k}_z: z \in \D \}$ is a total set in $H(\D)$. For more details and references on RKHS, see \cite{PR-rkHs-Book}. For a bounded operator $T$ on $H(\D)$, the {\textit{Berezin transform}} of $T$, denoted by $\widetilde{T}$, is a complex-valued function on $\D$ defined as
\[
\widetilde{T}(z)=\langle Tk_z,k_z \rangle \quad \text{for all }z\in\D.
\]
Thus, every bounded linear operator $T$ on $H(\D)$ induces a bounded function 
$\widetilde{T}$ on $H(\D)$. Indeed, $\| \widetilde{T} \|_{\infty} \leq \| T \|$. Therefore, the {\textit{Berezin transform}} of an operator yields crucial information about the operator. In this note we mainly focus on the {\textit{Berezin transform}} of Toeplitz operators on the Bergman space to determine the invertibility criterion of the Toeplitz operators. Throughout this article, $H^\infty(\D)$ stands for the algebra of bounded analytic functions on $\D$ and it is also a multiplier algebra for the Hardy space as well as the Bergman space, and $C(\T)$ is the space of all continuous functions on $\T$.

We will be using the following well-known results very often.

\begin{prop}\label{Prop:Properties of T_phi}
Let $\phi, \psi\in L^{\infty}(\D)$ and $g\in H^{\infty}(\D).$ For $c, d\in \C,$ the following results hold for the Bergman Toeplitz operators
\begin{enumerate}
		\item $T_{c\phi+d\psi}=cT_{\phi}+dT_{\psi}$.
		\item $T_{\phi}^*=T_{\overline{\phi}}$.
		\item $T_{\phi}T_{g}=T_{\phi g}$.
		\item $T_{\overline{g}}T_{\phi}=T_{\overline{g}\phi}$.
\end{enumerate}
\end{prop}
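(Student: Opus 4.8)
The plan is to argue directly from the definition $T_{\phi}f = P(\phi f)$, where $P$ denotes the orthogonal projection of $L^2(\D, dA)$ onto $L^2_a(\D)$, establishing (1) and (2) by routine manipulations of the inner product and then extracting (3) from the multiplier structure of $H^{\infty}(\D)$, with (4) following by duality. I would prove each identity by checking that both sides agree as operators on $L^2_a(\D)$.

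For (1), I would simply invoke the linearity of pointwise multiplication together with the linearity of $P$: for $f \in L^2_a(\D)$ one has $T_{c\phi+d\psi}f = P\big((c\phi+d\psi)f\big) = cP(\phi f) + dP(\psi f) = cT_{\phi}f + dT_{\psi}f$, and since $f$ is arbitrary the identity holds as operators. For (2), I would test against a second element of the space. Fixing $f,h \in L^2_a(\D)$ and using that $P$ is self-adjoint and fixes $h \in L^2_a(\D)$, I get $\langle T_{\phi}f, h\rangle = \langle P(\phi f), h\rangle = \langle \phi f, h\rangle$; writing out the defining integral and absorbing $\phi$ into the conjugated factor yields $\langle \phi f, h\rangle = \langle f, \overline{\phi}\,h\rangle = \langle f, P(\overline{\phi}\,h)\rangle = \langle f, T_{\overline{\phi}}h\rangle$. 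As $f,h$ were arbitrary, $T_{\phi}^{*} = T_{\overline{\phi}}$.

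The only step that is not purely formal is the one underlying (3), and I expect it to be the main (indeed the sole) obstacle: one must use that multiplication by $g \in H^{\infty}(\D)$ leaves $L^2_a(\D)$ invariant. Concretely, if $f$ is analytic and square-integrable and $g$ is bounded analytic, then $gf$ is again analytic and square-integrable, so $P(gf) = gf$, i.e. $T_{g}f = gf$. This is exactly the assertion, recalled in the Preliminaries, that $H^{\infty}(\D)$ is a multiplier algebra for the Bergman space. Granting it, I would compute $T_{\phi}T_{g}f = T_{\phi}(gf) = P(\phi g f) = T_{\phi g}f$ for every $f \in L^2_a(\D)$, which gives (3).

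Finally, I would deduce (4) from (2) and (3) rather than by a fresh computation. Taking adjoints and applying (2) twice, $\big(T_{\overline{g}}T_{\phi}\big)^{*} = T_{\phi}^{*}\,T_{\overline{g}}^{*} = T_{\overline{\phi}}\,T_{g}$, and this equals $T_{\overline{\phi} g}$ by (3), since $g \in H^{\infty}(\D)$. Taking adjoints once more and using (2) again gives $T_{\overline{g}}T_{\phi} = \big(T_{\overline{\phi} g}\big)^{*} = T_{\overline{\overline{\phi} g}} = T_{\phi \overline{g}} = T_{\overline{g}\phi}$, completing the proof. The whole argument is elementary once the multiplier invariance for $H^{\infty}(\D)$ is in hand, so I would state that invariance explicitly and keep the remaining manipulations brief.
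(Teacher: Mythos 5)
Your proof is correct. The paper itself states this proposition without proof, citing it as a collection of well-known facts, so there is no argument in the paper to compare against; your derivation (linearity of $P$, self-adjointness of $P$ for the adjoint formula, multiplier invariance $P(gf)=gf$ for part (3), and deducing (4) from (2) and (3) by taking adjoints) is the standard and complete justification.
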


\begin{lem}\label{Lem:A invertile iff A and A* bounded below}
Suppose $T$ is a bounded operator on a Hilbert space $\clh$. Then 
\begin{enumerate}
        \item $T$ is invertible if and only if both $T$ and $T^*$ are bounded below.
        \item $T$ is bounded below if and only if $T^*$ is surjective.
\end{enumerate}
\end{lem}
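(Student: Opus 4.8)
The plan is to prove part (2) first and then deduce part (1) from it, since the real content of the lemma is the duality between being bounded below and surjectivity. Throughout I write $\operatorname{Ran} T$ and $\ker T$ for the range and kernel of a bounded operator on $\clh$, and I make repeated use of the standard orthogonality relations $\overline{\operatorname{Ran} T} = (\ker T^*)^{\perp}$ and $(\ker T)^{\perp} = \overline{\operatorname{Ran} T^*}$, which hold for every $T \in \clb(\clh)$.

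The first preliminary step for part (2) is to record the elementary equivalence that $T$ is bounded below if and only if $T$ is injective and $\operatorname{Ran} T$ is closed. Indeed, if $\|Th\| \geq c\|h\|$ for some $c>0$, then $T$ is clearly injective, and whenever $Th_n \to y$ the estimate $\|h_n - h_m\| \leq c^{-1}\|Th_n - Th_m\|$ shows $(h_n)$ is Cauchy, so $h_n \to h$ and $y = Th \in \operatorname{Ran} T$; conversely, if $T$ is injective with closed range, then $T$ is a bounded bijection onto the Hilbert space $\operatorname{Ran} T$, whose inverse is bounded by the bounded inverse theorem, and that bound supplies the constant $c$.

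For the forward implication of part (2), suppose $T$ is bounded below, so that $T$ is injective and $\operatorname{Ran} T$ is closed. By the closed range theorem, $\operatorname{Ran} T^*$ is then closed as well, whence $\operatorname{Ran} T^* = \overline{\operatorname{Ran} T^*} = (\ker T)^{\perp} = \clh$, i.e. $T^*$ is surjective. Conversely, if $T^*$ is surjective, then $\operatorname{Ran} T^* = \clh$ is closed, so the closed range theorem forces $\operatorname{Ran} T$ to be closed; moreover $\ker T = (\overline{\operatorname{Ran} T^*})^{\perp} = \clh^{\perp} = \{0\}$, so $T$ is injective, and by the equivalence recorded above $T$ is bounded below. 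This proves part (2).

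For part (1), the forward direction is immediate: if $T$ is invertible, then $\|h\| = \|T^{-1}Th\| \leq \|T^{-1}\|\,\|Th\|$ shows $T$ is bounded below, and since $(T^*)^{-1} = (T^{-1})^*$ exists and is bounded, the same estimate applied to $T^*$ gives that $T^*$ is bounded below. For the converse, assume both $T$ and $T^*$ are bounded below. Applying part (2) to $T^*$ in place of $T$ shows that $(T^*)^* = T$ is surjective; since $T$ bounded below makes $T$ injective, $T$ is a bounded bijection and hence invertible by the bounded inverse theorem. The only genuinely non-elementary ingredient in the whole argument is the passage from dense range to closed (hence full) range, which is exactly where the closed range theorem—equivalently, the open mapping theorem—is invoked; everything else reduces to the definitions and the orthogonality relations, so I expect that single appeal to be the main point requiring care.
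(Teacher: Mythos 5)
Your proof is correct and complete. Note that the paper does not actually prove this lemma --- it is stated among the ``well-known results'' in Section 2 without proof --- so there is no argument to compare against; your route (bounded below $\Leftrightarrow$ injective with closed range, then the closed range theorem plus the orthogonality relations $(\ker T)^{\perp}=\overline{\operatorname{Ran} T^*}$) is exactly the standard textbook proof, and your identification of the open mapping/closed range theorem as the one non-elementary ingredient is accurate.
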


\begin{thm}[cf.\ \cite{McDONALD-SUNDBERG-TOEPLITZ OPER. ON DISC}]\label{Thm:analytic and coanalytic invertibility via Berezin transform}
For $\phi\in H^{\infty}(\D)$, let $T_{\phi}:L^2_a(\D)\to L^2_a(\D)$ be the Toeplitz operator. Then the following are equivalent
\begin{enumerate}
        \item $T_{\phi}$ is invertible.
        \item $T_{\overline{\phi}}$ is invertible.
        \item $\inf\limits_{z\in\D}|\phi(z)|>0$.
\end{enumerate}
\end{thm}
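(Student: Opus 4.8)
The plan is to treat $T_{\phi}$ concretely: since $\phi\in H^{\infty}(\D)$ is a multiplier of $L^2_a(\D)$, for analytic $f$ the product $\phi f$ is already analytic, so $T_{\phi}f=P(\phi f)=\phi f$; that is, $T_{\phi}$ is multiplication by $\phi$. The equivalence of (1) and (2) is then immediate from Proposition \ref{Prop:Properties of T_phi}(2), which gives $T_{\ol{\phi}}=T_{\phi}^{*}$, together with the elementary fact that a bounded operator is invertible iff its adjoint is. Thus it suffices to prove (1)$\Leftrightarrow$(3).

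For (3)$\Rightarrow$(1), set $\delta=\inf_{z\in\D}|\phi(z)|>0$. Since $\phi$ is analytic and never vanishes, $1/\phi$ is analytic on $\D$, and the bound $|1/\phi(z)|\le 1/\delta$ shows $1/\phi\in H^{\infty}(\D)$. Applying Proposition \ref{Prop:Properties of T_phi}(3) with the analytic symbols $\phi$ and $1/\phi$ yields $T_{\phi}T_{1/\phi}=T_{\phi\cdot(1/\phi)}=T_{1}=I$ and, reversing the roles of the two symbols, $T_{1/\phi}T_{\phi}=T_{1}=I$. Hence $T_{1/\phi}$ is a two-sided inverse and $T_{\phi}$ is invertible.

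The substantive direction is (1)$\Rightarrow$(3), and here the key observation is that the reproducing kernels are eigenvectors of the adjoint: for every $z\in\D$ and $f\in L^2_a(\D)$,
\[
\langle f, T_{\phi}^{*}K_{z}\rangle=\langle T_{\phi}f, K_{z}\rangle=(\phi f)(z)=\phi(z)\langle f, K_{z}\rangle,
\]
so that $T_{\phi}^{*}K_{z}=\ol{\phi(z)}\,K_{z}$. Now if $T_{\phi}$ is invertible, so is $T_{\phi}^{*}=T_{\ol{\phi}}$, and by Lemma \ref{Lem:A invertile iff A and A* bounded below} the operator $T_{\phi}^{*}$ is bounded below: there is $c>0$ with $\|T_{\phi}^{*}h\|\ge c\|h\|$ for all $h$. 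Evaluating at $h=K_{z}$ and using the eigenvector relation gives $|\phi(z)|\,\|K_{z}\|=\|T_{\phi}^{*}K_{z}\|\ge c\|K_{z}\|$, and cancelling $\|K_{z}\|\neq 0$ yields $|\phi(z)|\ge c$ for every $z\in\D$. Therefore $\inf_{z\in\D}|\phi(z)|\ge c>0$, which is (3).

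I expect the only real content to lie in recognizing the eigenvalue identity $T_{\phi}^{*}K_{z}=\ol{\phi(z)}\,K_{z}$ and in exploiting invertibility through the ``bounded below'' half of Lemma \ref{Lem:A invertile iff A and A* bounded below}; the remaining implications are formal consequences of the multiplier calculus in Proposition \ref{Prop:Properties of T_phi}. It is worth noting that the same computation shows $\wt{\phi}(z)=\phi(z)$ for analytic $\phi$, so condition (3) is literally the statement that the Berezin transform is bounded away from $0$, which is the form in which this theorem will be reused.
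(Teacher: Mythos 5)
Your proof is correct and complete. Note that the paper itself gives no argument for this statement: it is quoted as a known result with the citation ``cf.\ McDonald--Sundberg,'' so there is no internal proof to compare against. The route you take is the standard one for analytic symbols --- observing that $T_{\phi}$ is just multiplication by $\phi$ on $L^2_a(\D)$, using $T_{\phi}T_{1/\phi}=T_{1/\phi}T_{\phi}=I$ for the sufficiency of (3), and extracting the necessity from the eigenvalue identity $T_{\phi}^{*}K_{z}=\overline{\phi(z)}\,K_{z}$ combined with the ``bounded below'' characterization of invertibility --- and every step checks out, including the cancellation of $\|K_{z}\|\neq 0$ and the closing remark that $\widetilde{\phi}=\phi$ for analytic $\phi$, which is exactly the form in which the theorem is invoked later in the paper.
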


\begin{thm}[cf.\ \cite{Zhao_Zheng}]\label{Thm:invertible normal Toeplitz operator with harmonic symbol}
Let $\phi\in L^{\infty}(\D)$ be a bounded harmonic function on $\D$ such that $T_{\phi}$ is a normal operator on $L^2_a(\D)$. Then $T_{\phi}$ is invertible if and only if 
$\inf\limits_{z\in\D}|\widetilde{\,{\phi}}(z)|=\inf\limits_{z\in\D}|\phi(z)|>0$.
\end{thm}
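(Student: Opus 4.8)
The plan is to reduce the statement, via the harmonic invariance of the Berezin transform and the normality hypothesis, to a spectral computation for a single self-adjoint Toeplitz operator. \emph{First} I would dispose of the claimed equality of infima. For harmonic $\phi$ and each $z\in\D$, the composition $\phi\circ\tau_z$ (where $\tau_z$ is the M\"obius automorphism of $\D$ interchanging $0$ and $z$) is again harmonic, and a change of variables identifies the Berezin integral with the invariant average $\widetilde{\phi}(z)=\int_\D(\phi\circ\tau_z)\,dA=(\phi\circ\tau_z)(0)=\phi(z)$ by the area mean value property. Hence $\widetilde{\phi}=\phi$ and $\inf_{z}|\widetilde{\phi}(z)|=\inf_{z}|\phi(z)|$, so I may work with $\phi$ directly. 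Since $T_\phi$ is normal, $\|T_\phi h\|=\|T_\phi^* h\|$ for all $h$, and Lemma~\ref{Lem:A invertile iff A and A* bounded below} reduces invertibility of $T_\phi$ to $T_\phi$ being bounded below.

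\emph{Next} comes the key structural step: using normality to linearize the symbol. Writing $\phi=f+\bar g$ with $f,g$ analytic and using that reproducing kernels are eigenvectors of co-analytic Toeplitz operators ($T_{\bar g}k_z=\overline{g(z)}\,k_z$), a short computation gives $\|T_\phi k_z\|^2=|\phi(z)|^2+\bigl(\widetilde{|f|^2}(z)-|f(z)|^2\bigr)$ and $\|T_\phi^* k_z\|^2=|\phi(z)|^2+\bigl(\widetilde{|g|^2}(z)-|g(z)|^2\bigr)$. Normality forces these local variances to coincide, and together with the classification of normal Toeplitz operators with bounded harmonic symbol this yields $g=af+b$ with $|a|=1$. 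Consequently there are constants $\lambda\neq 0$, $\mu$ and a \emph{real-valued} bounded harmonic function $\psi$ with $\phi=\lambda\psi+\mu$, whence $T_\phi=\lambda T_\psi+\mu I$ with $T_\psi=T_\psi^*$. Thus $T_\phi$ is invertible iff $t_0:=-\mu/\lambda\notin\sigma(T_\psi)$, while $\inf_z|\phi(z)|=|\lambda|\,\mathrm{dist}\bigl(t_0,\psi(\D)\bigr)$, so the theorem reduces to the equivalence $t_0\notin\sigma(T_\psi)\iff \mathrm{dist}\bigl(t_0,\overline{\psi(\D)}\bigr)>0$.

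\emph{Finally} I would compute $\sigma(T_\psi)$ for the real bounded harmonic symbol $\psi$. Since $\langle T_\psi h,h\rangle=\int_\D\psi\,|h|^2\,dA$ lies in $[\inf_\D\psi,\sup_\D\psi]$, the self-adjoint operator $T_\psi$ satisfies $\sigma(T_\psi)\subseteq[\inf_\D\psi,\sup_\D\psi]=\overline{\psi(\D)}$; this already gives the implication $\inf_z|\phi(z)|>0\Rightarrow T_\phi$ invertible. Testing against normalized kernels gives $\langle T_\psi k_z,k_z\rangle=\psi(z)$, so the numerical range has endpoints exactly $\inf_\D\psi$ and $\sup_\D\psi$, which therefore lie in $\sigma(T_\psi)$. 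The \textbf{main obstacle} is to exclude spectral gaps, i.e.\ to prove $\sigma(T_\psi)=[\inf_\D\psi,\sup_\D\psi]$; equivalently, that $T_{\psi-t}$ fails to be bounded below for every $t\in(\inf_\D\psi,\sup_\D\psi)$. I expect to obtain this from the connectedness of the spectrum of a harmonic-symbol Toeplitz operator — its essential spectrum already filling the whole interval — or, more concretely, by constructing unit vectors that localize near the boundary cluster of the level set $\{\psi=t\}$; carrying out this localization is the crux of the proof. Granting it, $\mathrm{dist}(t_0,\overline{\psi(\D)})=0$ puts $t_0$ in $\sigma(T_\psi)$, which completes the equivalence.
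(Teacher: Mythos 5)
First, a point of comparison: the paper does not prove this theorem at all --- it is stated with ``cf.\ Zhao--Zheng'' and used as a black box, so there is no internal argument to measure yours against. Your outline is essentially the route one would expect (and is close to what the cited source actually does): the Berezin transform fixes bounded harmonic functions, so $\widetilde{\phi}=\phi$; normality forces the symbol onto a line, so $T_{\phi}=\lambda(T_{\psi}-t_0 I)$ with $\psi$ real harmonic; and the problem becomes identifying $\sigma(T_{\psi})$ with $[\inf_{\D}\psi,\sup_{\D}\psi]$. The computations you do carry out are correct: $T_{\overline{g}}k_z=\overline{g(z)}k_z$, the two ``variance'' identities $\|T_{\phi}k_z\|^2-|\phi(z)|^2=\widetilde{|f|^2}(z)-|f(z)|^2$ and its adjoint counterpart, and $\inf_z|\phi(z)|=|\lambda|\,\mathrm{dist}(t_0,\psi(\D))$ all check out.

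As a proof, however, the proposal has two genuine holes, both of which you flag but neither of which you close. (i) The passage from the identity $\widetilde{|f|^2}-|f|^2=\widetilde{|g|^2}-|g|^2$ on $\D$ to $g=af+b$ with $|a|=1$ is exactly the classification of normal Toeplitz operators with bounded harmonic symbol; invoking that classification to justify the step is circular unless you either prove it (e.g.\ by expanding $f\circ\tau_z$ and $g\circ\tau_z$ at the origin and comparing Taylor coefficients) or cite it as an external theorem. (ii) More seriously, the crux --- that $T_{\psi-t}$ fails to be bounded below for every $t$ strictly between $\inf_{\D}\psi$ and $\sup_{\D}\psi$, i.e.\ that the self-adjoint operator $T_{\psi}$ has no spectral gaps --- is left at the level of ``I expect to obtain this.'' Self-adjointness together with knowing the two endpoints of the spectrum does not exclude gaps, and Widom-type connectedness of Toeplitz spectra is not automatic on the Bergman space (it fails for general bounded symbols there). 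What you need is precisely the McDonald--Sundberg theorem that $\sigma(T_u)=[\inf_{\D}u,\sup_{\D}u]$ for real bounded harmonic $u$, which is the engine behind the Zhao--Zheng result; without reproducing its localization argument or citing it explicitly, your text is an outline of the proof rather than a proof.
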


\section{Toeplitz operator on the Bergman space}

In this section we discuss the invertibility of Bergman Toeplitz operators with the help of the Berezin transform of the symbol. More precisely, Theorem \ref{Thm:|s|<1 iff bounded below} gives an affirmative answer to the Problem 2.3 stated in \cite{Yoneda-ON INVERTIBILITY OF BERGMAN TOEP. OPER.} in a more general setting. Consequently, we obtain a refined characterization of invertible Bergman Toeplitz operators for some certain bounded harmonic functions.

\begin{thm}\label{Thm:|s|<1 iff bounded below}
Let $T\in\clb(\clh)$ be a hyponormal operator and $s\in \C$ with $|s|<1$. Then the following two are equivalent
\begin{enumerate}
\item $T^*$ is bounded below.
\item $sT+T^*$ is bounded below.
\end{enumerate}
\end{thm}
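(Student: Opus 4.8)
The plan is to reduce the two bounded-below hypotheses to genuine invertibility and then pass between $T$ and $sT+T^{*}$ by an operator factorization. A direct assault seems hopeless: expanding
\[
\|(sT+T^{*})h\|^{2}=|s|^{2}\|Th\|^{2}+\|T^{*}h\|^{2}+2\operatorname{Re}\!\big(s\langle T^{2}h,h\rangle\big)
\]
and trying to bound it below via $\|T^{*}h\|\le\|Th\|$ fails, since the cross term is not controlled by $\|Th\|$ and $\|T^{*}h\|$ alone and can nearly cancel the positive part. So I would instead exploit the algebraic structure of the two operators.

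The first key observation is that both operators in question are \emph{cohyponormal}. Since $T$ is hyponormal, $T^{*}$ satisfies $T^{*}(T^{*})^{*}=T^{*}T\ge TT^{*}=(T^{*})^{*}T^{*}$, so $T^{*}$ is cohyponormal; and a one-line commutator computation gives, for $A:=sT+T^{*}$,
\[
A^{*}A-AA^{*}=(|s|^{2}-1)(T^{*}T-TT^{*})\le 0
\]
because $|s|\le 1$, so $A$ is cohyponormal too. The payoff is an elementary lemma: a cohyponormal operator $X$ is bounded below if and only if it is invertible. Indeed, bounded-belowness gives $\ker X=0$ and closed range, while cohyponormality ($\|Xh\|\le\|X^{*}h\|$) forces $\ker X^{*}\subseteq\ker X=0$, so $\overline{\operatorname{ran}X}=(\ker X^{*})^{\perp}=\clh$ and the range is all of $\clh$. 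Applying this to $T^{*}$ and to $A$, condition (1) is equivalent to $T$ being invertible and condition (2) to $A=sT+T^{*}$ being invertible. It therefore suffices to prove, for hyponormal $T$ with $|s|<1$, that $T$ is invertible if and only if $sT+T^{*}$ is invertible.

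For the forward implication I would assume $T$ invertible and write $sT+T^{*}=T\,(sI+T^{-1}T^{*})$. Operator monotonicity of inversion on positives ($T^{*}T\ge TT^{*}\Rightarrow(T^{*}T)^{-1}\le(TT^{*})^{-1}$) yields $\|(T^{*})^{-1}T\|\le 1$; setting $D=T^{-1}T^{*}$, so that $D^{-1}=(T^{*})^{-1}T$, this means $\sigma(D)\subseteq\{z:|z|\ge 1\}$, and since $|-s|<1$ we get $-s\notin\sigma(D)$, so $sI+D$ and hence $sT+T^{*}$ is invertible. For the converse I would assume $A=sT+T^{*}$ invertible and use the identity $A^{*}-\bar s A=(1-|s|^{2})T$ to reduce the claim to invertibility of $A^{*}-\bar s A=(A^{*}A^{-1}-\bar s I)A$. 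Here cohyponormality of $A$ gives directly $\|A(A^{*})^{-1}\|\le 1$, so with $E=A^{*}A^{-1}$ we again obtain $\sigma(E)\subseteq\{|z|\ge 1\}$; as $|\bar s|<1$, $\bar s\notin\sigma(E)$, so $A^{*}A^{-1}-\bar s I$ is invertible and $T$ is invertible.

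The main obstacle is the passage from ``bounded below'' to ``invertible'': for hyponormal operators this is \emph{false} (the unilateral shift is bounded below but not invertible), so the crucial point is to notice that the relevant operators $T^{*}$ and $sT+T^{*}$ are cohyponormal rather than hyponormal, which is exactly the side on which bounded-belowness upgrades to invertibility. The second delicate point is choosing the correct factorization in each direction: a bound $\|F\|\le 1$ only gives $\sigma(F)\subseteq\overline{\D}$, which does \emph{not} exclude the interior point $-s$; one must instead arrange the factor whose \emph{inverse} has norm at most $1$, so that the spectrum lies in $\{|z|\ge 1\}$ and the hypothesis $|s|<1$ performs the separation.
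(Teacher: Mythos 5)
Your proof is correct, but it follows a genuinely different route from the paper's. The paper argues by direct norm estimates: for $(1)\Rightarrow(2)$ it uses invertibility of $T$ to write $f=Th$ and bounds $\|(sT+T^*)Th\|$ below by $\tfrac{1-|s|}{\|T^{-1}\|}\|f\|$ via Cauchy--Schwarz and hyponormality; for $(2)\Rightarrow(1)$ it runs a contradiction argument with a sequence $f_n$, $\|T^*f_n\|\to 0$, using surjectivity of $(sT+T^*)^*$ to solve $(\bar sT^*+T)h_n=f_n$. You instead first upgrade both bounded-below hypotheses to invertibility by observing that $T^*$ and $A=sT+T^*$ are cohyponormal --- the computation $A^*A-AA^*=(|s|^2-1)(T^*T-TT^*)\le 0$ is correct and is a nice structural fact the paper never isolates --- and then prove the invertibility equivalence by the factorizations $sT+T^*=T(sI+T^{-1}T^*)$ and $A^*-\bar sA=(A^*A^{-1}-\bar sI)A$ together with the spectral localization $\sigma(T^{-1}T^*)\subseteq\{|z|\ge 1\}$ (and likewise for $A^*A^{-1}$), which follows since the relevant inverses have norm at most $1$; the identity $A^*-\bar sA=(1-|s|^2)T$ handles the converse cleanly. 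All the individual steps check out (in particular $\|A(A^*)^{-1}h\|=\|Au\|\le\|A^*u\|=\|h\|$ with $u=(A^*)^{-1}h$, and $T^*(T^*T)^{-1}T\le T^*(TT^*)^{-1}T=I$ by operator monotonicity of inversion). The trade-off: the paper's argument is entirely elementary and produces explicit lower bounds for the bounded-below constants, whereas yours invokes the spectral mapping theorem and operator monotonicity but is more conceptual, makes the role of $|s|<1$ transparent as a spectral separation, and correctly flags the key subtlety that bounded-below upgrades to invertible only on the cohyponormal side.
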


\begin{proof}
Suppose $T^*$ is bounded below on $\clh$. Since $T$ is hyponormal on $\clh$, $\|Th \| \geq \|T^* h \|$ for all $h \in \clh$, and hence $T$ is also bounded below. 
Now Lemma \ref{Lem:A invertile iff A and A* bounded below} infers that $T$ is invertible. If $f\in \clh$ and $f\neq 0$, then we choose $0\neq h\in \clh$ such that $f=Th$. For
$s\in \C$ with $|s|<1$, consider 
\begin{align*}
\|(sT+T^*)f\| 
        &=\|(sT+T^*)Th\| \\
        &=\sup\{|\langle (sT+T^*)Th, h'\rangle| : h'\in \clh,\, \|h'\|=1\} \\
        &\geq \left|\left\langle (sT+T^*)Th, \frac{h}{\|h\|}\right\rangle\right| \\
        &=\frac{1}{\|h\|}|\langle sT^2h + T^*Th, h\rangle| \\
        &=\frac{1}{\|h\|}|s\langle Th, T^*h\rangle + \|Th\|^2| \\
        &\geq\frac{1}{\|h\|}\left(\|Th\|^2-|s||\langle Th, T^*h \rangle|\right)\\
        &\geq\frac{1}{\|h\|}(\|Th\|^2-|s|\|Th\| \|T^*h\|) 
        \tag{By Cauchy-Schwarz inequality} \\
        &\geq\frac{1}{\|h\|}(\|Th\|^2-|s|\|Th\|^2) \tag{By hyponormality of $T$} \\
        &=\frac{(1-|s|)}{\|h\|}\|Th\|^2.
\end{align*}
Now $h=T^{-1}f$. Thus, $0<\|h\|\leq\|T^{-1}\|\|f\|$. Therefore
\begin{align*}
\|(sT+T^*)f\|
&\geq\frac{(1-|s|)}{\|T^{-1}\|\|f\|}\|f\|^2=\frac{(1-|s|)}{\|T^{-1}\|}\|f\| \qquad (f\in \clh).
\end{align*}
Hence $sT+T^*$ is bounded below.

Conversely, assume that $sT+T^*$ is a bounded below operator on $\clh$. Suppose, if possible, that $T^*$ is not bounded below. Then there exists a sequence $(f_n)_{n\geq 1}$ in $\clh$ with $\|f_n\|=1$ for every $n\geq 1$ and $\|T^*f_n\|\to 0$ as $n\to\infty$. Recall that an operator $A$ on $\clh$ is bounded below if and only if $A^*$ is surjective. Therefore, for every $n\geq 1$, there exists $h_n\in\clh$ such that 
\[
(\Bar{s}T^*+T)h_n=f_n.
\]
Also,
\begin{align*}
\|T^*f_n\|
        &=\|T^*(\Bar{s}T^*+T)h_n\| \\
        &=\|\Bar{s}T^{*2}h_n+T^*Th_n\| \\
        &\geq \left|\left\langle \Bar{s}T^{*2}h_n+T^*Th_n, \frac{h_n}{\|h_n\|} 			 \right\rangle\right| \\
        &=\frac{1}{\|h_n\|}\left|\Bar{s}\langle T^*h_n, Th_n \rangle + \|Th_n\|^2\right| \\
        &\geq\frac{1}{\|h_n\|}(\|Th_n\|^2-|s|\|Th_n\| \|T^*h_n\|) 
        \tag{By Cauchy-Schwarz inequality} \\
        &\geq\frac{1}{\|h_n\|}(\|Th_n\|^2-|s|\|Th_n\|^2) \tag{By hyponormality of $T$} \\
        &=\frac{(1-|s|)}{\|h_n\|}\|Th_n\|^2 \\
        &=(1-|s|)\|h_n\|\left\|T\left(\frac{h_n}{\|h_n\|}\right)\right\|^2.
\end{align*}
Now
\[
1=\|f_n\|=\|(\Bar{s}T^*+T)h_n\|\leq\|\Bar{s}T^*+T\|\|h_n\| ~~\text{for every}~~ n \geq 1. 
\]
Thus,
\begin{align*}
        \|T^*f_n\|
        \geq \frac{(1-|s|)}{\|\Bar{s}T^*+T\|}\left\|T\left(\frac{h_n}{\|h_n\|}\right)\right\|^2
        \geq \frac{(1-|s|)}{\|\Bar{s}T^*+T\|}\left\|T^*\left(\frac{h_n}{\|h_n\|}\right)\right\|^2 \qquad (n\geq 1).
\end{align*}
Thus it follows that
\[
\left\|T\left(\frac{h_n}{\|h_n\|}\right)\right\|\to 0 \text{ and }
\left\|T^*\left(\frac{h_n}{\|h_n\|}\right)\right\|\to 0 \text{ as } n\to\infty.
\]
Since $sT+T^*$ is bounded below, we can choose $M>0$ such that $M\|h_n\|\leq\|(sT+T^*)h_n\|$ for every $n\geq 1$. Therefore
\[
0<M\leq\left\|sT\left(\frac{h_n}{\|h_n\|}\right)+T^*\left(\frac{h_n}{\|h_n\|}\right)\right\|\to 0 \text{ as } n\to\infty,
\]
which leads to a contradiction that $sT+T^*$ is bounded below. This establishes $(2) \implies (1)$.

\end{proof}

\begin{thm}\label{Thm:|s|>1 iff bounded below}
Let $T\in\clb(\clh)$ be a hyponormal operator and $s\in \C$ with $|s|>1$. The following two are equivalent
\begin{enumerate}
\item $T$ is bounded below.
\item $sT+T^*$ is bounded below.
\end{enumerate}
\end{thm}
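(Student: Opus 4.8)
The plan is to prove both implications directly, by one-sided norm estimates that rest solely on the hyponormality bound $\|T^*h\|\le\|Th\|$ together with the triangle and reverse triangle inequalities. This is in sharp contrast with the $|s|<1$ case of Theorem \ref{Thm:|s|<1 iff bounded below}: there the relevant hypothesis is that $T^*$ is bounded below, a quantity hyponormality does \emph{not} control from below, forcing the delicate surjectivity/preimage argument; here the relevant hypothesis is that $T$ is bounded below, which matches the dominant $\|Th\|$ term, so everything stays elementary. In particular I do not expect to need Lemma \ref{Lem:A invertile iff A and A* bounded below} or Theorem \ref{Thm:|s|<1 iff bounded below} at all.

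For $(1)\Rightarrow(2)$, I would start from $\|Th\|\ge c\|h\|$ for some $c>0$ and all $h\in\clh$, and exploit that for $|s|>1$ the term $sTh$ dominates $T^*h$. Using the reverse triangle inequality and then hyponormality,
\[
\|(sT+T^*)h\| \ge |s|\,\|Th\| - \|T^*h\| \ge (|s|-1)\|Th\| \ge (|s|-1)c\,\|h\|.
\]
Since $|s|>1$, the constant $(|s|-1)c$ is strictly positive, so $sT+T^*$ is bounded below. This is the one step where the hypothesis $|s|>1$ is genuinely used.

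For $(2)\Rightarrow(1)$, I would run the same two ingredients in the opposite direction. Assuming $\|(sT+T^*)h\|\ge M\|h\|$ for some $M>0$, the ordinary triangle inequality and hyponormality give
\[
M\|h\| \le \|(sT+T^*)h\| \le |s|\,\|Th\| + \|T^*h\| \le (|s|+1)\|Th\|,
\]
whence $\|Th\|\ge \frac{M}{|s|+1}\|h\|$ for all $h$, i.e.\ $T$ is bounded below. Notice that this direction invokes hyponormality but not the size of $|s|$, which is consistent with the fact that only $(1)\Rightarrow(2)$ can distinguish the regimes $|s|>1$ and $|s|<1$.

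I do not anticipate a serious obstacle. The whole argument hinges on hyponormality trapping $\|T^*h\|$ in the interval $[0,\|Th\|]$, so the cross term $T^*h$ can always be absorbed into $\|Th\|$; the only thing to get right is the matching of the conditions, namely that the correct companion hypothesis for $|s|>1$ is boundedness below of $T$ (not $T^*$), exactly mirroring the dominant $\|Th\|$ term. The main \emph{conceptual} point is therefore to recognize that both implications collapse to these elementary estimates, rather than requiring the adjoint/surjectivity machinery that was unavoidable in the $|s|<1$ setting.
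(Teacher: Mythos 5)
Your proof is correct and is essentially identical to the paper's: the paper establishes the same two-sided chain $(|s|+1)\|Th\|\geq\|(sT+T^*)h\|\geq(|s|-1)\|Th\|$ using exactly the triangle/reverse-triangle inequalities and the hyponormality bound $\|T^*h\|\leq\|Th\|$, merely written as a single chain rather than as two separate implications. No further comment is needed.
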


\begin{proof}
For $h \in\clh$, 
\begin{align*}
(|s|+1)\|Th\|
        &=|s|\|Th\|+\|Th\| \\
        &\geq|s|\|Th\|+\|T^*h\| \tag{By hyponormality of $T$} \\
        &\geq\|sTh+T^*h\|=\|(sT+T^*)h\| \\
        &\geq|s|\|Th\|-\|T^*h\| \\
        &\geq(|s|-1)\|Th\|.
\end{align*}
Now $|s| -1>0$ as $|s|>1$. Therefore, the above chain of inequalities establishes that 
$(1) \iff (2)$.
\end{proof}

\begin{ex}
Consider the shift operator $T_z$ on the Hardy space $H^2(\D)$. It is an analytic Toeplitz operator so is hyponormal. Also, $T_z$ is bounded below, in fact, $\|T_zh\|=\|h\|$ for all $h\in H^2(\D)$. If $|s|>1$ then it follows from Theorem \ref{Thm:|s|>1 iff bounded below} that $sT_z+T_z^*$ is also bounded below. Note that $T_z$ is not 
surjective and hence by Lemma \ref{Lem:A invertile iff A and A* bounded below}, 
$T_z^*$ is not bounded below. This shows that $T^*$ may not be bounded below while $sT+T^*$ is bounded below for $|s|>1$.  
\end{ex}

From the above theorems, we have the following result.

\begin{prop}\label{Prop: T iff invertible}
Let $T\in\clb(\clh)$ be hyponormal on $\clh$ and $s\in\C$ with $|s|\neq 1$. Then the following are equivalent
\begin{enumerate}
\item $T$ is invertible on $\clh$.
\item $sT+T^*$ is invertible on $\clh$.
\end{enumerate}
\end{prop}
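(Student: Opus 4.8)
The plan is to derive Proposition~\ref{Prop: T iff invertible} as a direct consequence of the two preceding theorems, using Lemma~\ref{Lem:A invertile iff A and A* bounded below}(1), which says that an operator is invertible precisely when both it and its adjoint are bounded below. The key observation is that for a hyponormal operator $T$, boundedness below of $T^*$ already forces boundedness below of $T$ (since $\|Th\|\geq\|T^*h\|$), so $T$ is invertible if and only if $T^*$ is bounded below. This lets me convert the statement ``$T$ is invertible'' into the cleaner condition ``$T^*$ is bounded below,'' which is exactly the left-hand side of Theorem~\ref{Thm:|s|<1 iff bounded below}.

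The natural split is by the modulus of $s$. First I would treat the case $|s|<1$: here Theorem~\ref{Thm:|s|<1 iff bounded below} gives that $T^*$ is bounded below if and only if $sT+T^*$ is bounded below. Combined with the hyponormality remark, $T$ is invertible $\iff$ $T^*$ is bounded below $\iff$ $sT+T^*$ is bounded below. To upgrade ``$sT+T^*$ bounded below'' to ``$sT+T^*$ invertible,'' I would apply the same argument to the adjoint by a symmetry trick: the adjoint of $sT+T^*$ is $\bar s T^* + T$, which (up to a scalar factor) is again of the form covered by the theorems applied to $T$. Concretely, $(sT+T^*)^* = T + \bar s T^* = \bar s(\,\bar s^{-1}T + T^*\,)$, and since $|\bar s^{-1}|>1$ when $|s|<1$, Theorem~\ref{Thm:|s|>1 iff bounded below} applies to show $\bar s^{-1}T+T^*$ is bounded below iff $T$ is bounded below, which holds once $T$ is invertible. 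Thus both $sT+T^*$ and its adjoint are bounded below, giving invertibility via Lemma~\ref{Lem:A invertile iff A and A* bounded below}.

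For the case $|s|>1$, the roles swap: Theorem~\ref{Thm:|s|>1 iff bounded below} directly equates ``$T$ bounded below'' with ``$sT+T^*$ bounded below.'' Since $T$ invertible is equivalent to ``$T$ bounded below and $T^*$ bounded below,'' and hyponormality makes these two conditions equivalent to each other, $T$ invertible $\iff$ $T$ bounded below $\iff$ $sT+T^*$ bounded below. To obtain full invertibility of $sT+T^*$, I again pass to the adjoint $\bar s^{-1}T + T^*$ scaled by $\bar s$; now $|\bar s^{-1}|<1$, so Theorem~\ref{Thm:|s|<1 iff bounded below} governs its boundedness below in terms of $T^*$ being bounded below, which again follows from invertibility of $T$.

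The only real subtlety, and the step I would flag as the main bookkeeping obstacle, is correctly handling the adjoint $(sT+T^*)^* = \bar s T^* + T$ and extracting the scalar so that the resulting operator matches the normalized form $(\text{scalar})\,T + T^*$ to which the theorems apply, together with tracking how the modulus of the scalar crosses between the $|\cdot|<1$ and $|\cdot|>1$ regimes. Once the scalar factorizations are arranged so that each adjoint falls under the complementary theorem, the equivalences chain together cleanly and the proposition follows.
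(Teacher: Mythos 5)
Your overall route is the same as the paper's: reduce invertibility to bounded-belowness of the operator and its adjoint via Lemma \ref{Lem:A invertile iff A and A* bounded below}, split on $|s|<1$ versus $|s|>1$, and feed the scaled adjoint $(sT+T^*)^*=\bar s\bigl(\bar s^{-1}T+T^*\bigr)$ into whichever of Theorems \ref{Thm:|s|<1 iff bounded below} and \ref{Thm:|s|>1 iff bounded below} applies. Your $|s|<1$ case is correct. The problem is in the $|s|>1$ case, where you claim that ``hyponormality makes these two conditions equivalent to each other,'' i.e.\ that for hyponormal $T$ boundedness below of $T$ implies boundedness below of $T^*$. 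That is false: hyponormality gives $\|Th\|\geq\|T^*h\|$, hence only the implication ``$T^*$ bounded below $\Rightarrow T$ bounded below.'' The unilateral shift $T_z$ --- precisely the example the paper places between Theorem \ref{Thm:|s|>1 iff bounded below} and this proposition --- is hyponormal and bounded below while $T_z^*$ is not, and for $|s|>1$ the operator $sT_z+T_z^*$ is bounded below even though $T_z$ is not invertible. So your chain ``$T$ invertible $\iff T$ bounded below $\iff sT+T^*$ bounded below'' asserts a false equivalence, and as written the backward implication of the proposition in the $|s|>1$ regime is not proved.

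The gap is repairable with ingredients you already have on the page: for $|s|>1$, invertibility of $sT+T^*$ gives bounded-belowness of \emph{both} $sT+T^*$ and its adjoint $\bar sT^*+T$; Theorem \ref{Thm:|s|>1 iff bounded below} then yields $T$ bounded below, while Theorem \ref{Thm:|s|<1 iff bounded below} applied to $\bar s^{-1}T+T^*$ (with $|\bar s^{-1}|<1$) yields $T^*$ bounded below, and Lemma \ref{Lem:A invertile iff A and A* bounded below} gives invertibility of $T$. You carry out exactly this adjoint computation, but only in the direction ``$T$ invertible $\Rightarrow sT+T^*$ invertible''; it is equally needed in the converse. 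A further small point: your factorization divides by $\bar s$, which is undefined at $s=0$; that case needs the separate (trivial) remark that $0\cdot T+T^*=T^*$ is invertible iff $T$ is, as the paper notes.
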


\begin{proof}
By virtue of Lemma \ref{Lem:A invertile iff A and A* bounded below}, it suffices to show that both $T$ and $T^*$ are bounded below if and only if both $sT+T^*$ and $\Bar{s}T^*+T$ are bounded below. First we note that if $s=0$, then it is trivial as $T$ is invertible if and only if $T^*$ is invertible. Now assume that $s\neq 0$.\\
    
\NI     
{\bfseries Case (i):} Suppose $|s|>1$.

Let both $T$ and $T^*$ be bounded below. It follows from Theorem \ref{Thm:|s|>1 iff bounded below} that $sT+T^*$ is bounded below. Moreover,
\[
(sT+T^*)^* = \Bar{s}T^*+T=\Bar{s}\left(\frac{1}{\Bar{s}}T+T^*\right) \text{ with } \left|\frac{1}{\Bar{s}}\right|=\frac{1}{|s|}<1.
\]
Thus, Theorem \ref{Thm:|s|<1 iff bounded below} implies that $\Bar{s}T^*+T$ is bounded below as well. 

On the other hand, if both $sT+T^*$ and $\Bar{s}T^*+T$ are bounded below, then $T$ is bounded below follows from Theorem \ref{Thm:|s|>1 iff bounded below}. However, the relation $\frac{1}{\Bar{s}}T+T^*=\frac{1}{\Bar{s}}(\Bar{s}T^*+T)$ along with Theorem \ref{Thm:|s|<1 iff bounded below} yields $T^*$ to be bounded below too.
    
\NI     
{\bfseries Case (ii):} Suppose $0<|s|<1$.

Using Theorems \ref{Thm:|s|<1 iff bounded below} and \ref{Thm:|s|>1 iff bounded below} and the relation
\[
\Bar{s}T^*+T=\Bar{s}\left(\frac{1}{\Bar{s}}T+T^*\right) \text{ with } \left|\frac{1}{\Bar{s}}\right|=\frac{1}{|s|}>1, 
\]
yields that both $T$ and $T^*$ are bounded below if and only if both $sT+T^*$ and 
$\Bar{s}T^*+T$ are bounded below.
\end{proof}

We are now in a position to present the main result of this section.

\begin{thm}\label{Thm:main theorem}
Let $\phi=cg+d\overline{g}$, where $g\in H^{\infty}(\D)$ and the constants $c,d\in\C$, then the following are equivalent
\begin{enumerate}
\item $T_{\phi}$ is invertible on $L^2_a(\D)$.
\item There exists $\delta>0$ such that $|\phi(z)|\geq \delta$ for all $z\in\D$, i.e., $\inf\limits_{z\in\D}|\widetilde{\,{\phi}}(z)|=\inf\limits_{z\in\D}|\phi(z)|>0$.
\end{enumerate}
\end{thm}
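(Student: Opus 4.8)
The plan is to first dispose of the Berezin-transform equality and then reduce the invertibility question to the hyponormal machinery of this section. For the first part I would observe that for $g\in H^\infty(\D)$ the analytic Toeplitz operator satisfies $T_g k_z=gk_z$ (because $gk_z$ is again in $L^2_a(\D)$), so a direct reproducing-kernel computation gives $\wt{g}(z)=\la T_g k_z,k_z\ra=g(z)$, and taking adjoints $\wt{\ol g}=\ol g$. Hence $\wt{\phi}=c\wt g+d\wt{\ol g}=\phi$, which makes the identity $\inf_{z\in\D}|\wt\phi(z)|=\inf_{z\in\D}|\phi(z)|$ automatic. It therefore remains to prove that $T_\phi$ is invertible if and only if $\inf_{z\in\D}|\phi(z)|>0$.

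Next I would set up the operator-theoretic picture. By Proposition \ref{Prop:Properties of T_phi} we have $T_\phi=cT_g+dT_{\ol g}=cT_g+dT_g^*$, and $T_g$ is multiplication by $g$ on $L^2_a(\D)$; since it is the restriction to the invariant subspace $L^2_a(\D)$ of the normal multiplication operator on $L^2(\D,dA)$, it is subnormal, in particular hyponormal. The degenerate cases are then immediate: if $c=d=0$ then $\phi\equiv 0$ and neither condition holds, while if exactly one of $c,d$ vanishes then $T_\phi$ is a nonzero scalar multiple of $T_g$ or of $T_g^*$, so Theorem \ref{Thm:analytic and coanalytic invertibility via Berezin transform} gives invertibility iff $\inf_z|g(z)|>0$, which coincides with $\inf_z|\phi(z)|>0$ up to that nonzero constant.

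For the principal case $c,d\neq 0$ with $|c|\neq|d|$, I would factor $T_\phi=d\big(sT_g+T_g^*\big)$ with $s=c/d$ and $|s|\neq 1$. Proposition \ref{Prop: T iff invertible} then yields that $T_\phi$ is invertible if and only if $T_g$ is invertible, and Theorem \ref{Thm:analytic and coanalytic invertibility via Berezin transform} identifies the latter with $\inf_z|g(z)|>0$. To transfer this to $\phi$, I would use the pointwise identity, valid at each $z$ with $g(z)\neq0$ upon writing $\theta=\arg g(z)$,
\[
|\phi(z)|=|g(z)|\,\big|c e^{i\theta}+d e^{-i\theta}\big|,
\]
together with the elementary bounds $\big||c|-|d|\big|\le |c e^{i\theta}+d e^{-i\theta}|\le |c|+|d|$ (the case $g(z)=0$ being trivial since then $\phi(z)=0$). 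Taking infima gives
\[
\big||c|-|d|\big|\,\inf_{z\in\D}|g(z)|\;\le\;\inf_{z\in\D}|\phi(z)|\;\le\;\big(|c|+|d|\big)\inf_{z\in\D}|g(z)|,
\]
so that $\big||c|-|d|\big|>0$ forces $\inf_z|\phi(z)|>0\iff\inf_z|g(z)|>0$, completing this case.

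The remaining boundary case $|c|=|d|\neq 0$ is where I expect the main obstacle, precisely because Proposition \ref{Prop: T iff invertible} is unavailable when $|s|=1$ and the factor $|ce^{i\theta}+de^{-i\theta}|$ can vanish. Here I would instead exploit normality: writing $c=|c|e^{i\alpha}$, $d=|c|e^{i\beta}$ and $\gamma=(\alpha-\beta)/2$, one computes $\phi=2|c|e^{i(\alpha+\beta)/2}\,\mathrm{Re}\big(e^{i\gamma}g\big)$, so $\phi=\lambda u$ with $\lambda\in\C$ a nonzero constant and $u=\mathrm{Re}(e^{i\gamma}g)$ a real-valued bounded harmonic function. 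Then $T_\phi=\lambda T_u$ with $T_u^*=T_u$ self-adjoint, so $T_\phi$ is normal, and Theorem \ref{Thm:invertible normal Toeplitz operator with harmonic symbol} applies directly to give that $T_\phi$ is invertible iff $\inf_z|\wt\phi(z)|=\inf_z|\phi(z)|>0$. Combining the cases establishes the stated equivalence.
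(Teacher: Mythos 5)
Your proposal is correct and follows essentially the same route as the paper: the boundary case $|c|=|d|$ is handled via normality of $T_\phi$ and Theorem~\ref{Thm:invertible normal Toeplitz operator with harmonic symbol}, and the case $|c|\neq|d|$ via Proposition~\ref{Prop: T iff invertible}, Theorem~\ref{Thm:analytic and coanalytic invertibility via Berezin transform}, and the same pointwise comparison of $|\phi(z)|$ with $|g(z)|$. The only cosmetic differences are that you establish normality by rewriting $\phi$ as a nonzero constant times a real harmonic symbol instead of the paper's direct verification that $T_\phi^*T_\phi=T_\phi T_\phi^*$, and that you explicitly check $\wt{\phi}=\phi$, which the paper leaves implicit.
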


\begin{proof}
If either $c=0$ or $d=0$, then the equivalence of (1) and (2) follows from Theorem \ref{Thm:analytic and coanalytic invertibility via Berezin transform}. Assume now that $c\neq0$ and $d\neq0$ and let $s=c/d$, then $T_{\phi}=dT_{sg+\overline{g}}$.
    
\NI    
{\bfseries Case (i):} For $|s|=1$. 

Firstly, $T_{\phi}$ is normal on $L^2_a(\D)$. Indeed,
\begin{align*}
        T_{\phi}^*T_{\phi}
        &=|d|^2(\Bar{s}T_{\overline{g}}+T_g)(sT_{g}+T_{\overline{g}})\\    
        &=|d|^2(T_{\overline{g}}T_{g}+sT_{g}T_{g}+\Bar{s}T_{\overline{g}}
        T_{\overline{g}}+T_{g}T_{\overline{g}})
        \\
        &=|d|^2(sT_{g}+T_{\overline{g}})(\Bar{s}T_{\overline{g}}+T_g)\\
        &=T_{\phi}T_{\phi}^*.
\end{align*}
Hence, Theorem \ref{Thm:invertible normal Toeplitz operator with harmonic symbol} yields the equivalence of $(1)$ and $(2)$.

\NI    
{\bfseries Case (ii):} When $|s| \neq 1$.
    
Note that for $g\in H^{\infty}(\D)$, $T_g$ is hyponormal because 
for $f\in L^2_a(\D)$,
\[
\|T^*_{g}f\| = \|T_{\overline{g}}f\|=\|P(\overline{g}f)\|\leq\|\overline{g}f\|=\|gf\|=\|T_{g}f\|.
\]
Also, for $|s|\neq1$ we have
\[
||s|-1||g(z)|=\left||sg(z)|-\left|\overline{g(z)}\right|\right|\leq\left|sg(z)+\overline{g(z)}\right|\leq(|s|+1)|g(z)|, \qquad (z\in\D).
\]
Thus 
\begin{equation}\label{Eq:Equivalence of g and sg+gBar}
||s|-1|\left(\inf_{z\in\D}|g(z)|\right)\leq\inf_{z\in\D}\left|sg(z)+\overline{g(z)}\right|\leq(|s|+1) \left(\inf_{z\in\D}|g(z)|\right).
\end{equation}
Let $T_{\phi}$ be invertible on $L^2_a(\D)$. Thus, $T_{sg+\overline{g}} = sT_{g}+T^*_{g}$ is invertible. Employing Proposition \ref{Prop: T iff invertible}, we have $T_{g}$ is invertible on $L^2_a(\D)$. It now follows from Theorem \ref{Thm:analytic and coanalytic invertibility via Berezin transform} that $\inf\limits_{z\in\D}|g(z)|>0.$ Hence the inequality \eqref{Eq:Equivalence of g and sg+gBar} implies that 
\[
\inf_{z\in\D}|\phi(z)|=|d|\inf_{z\in\D}\left|sg(z)+\overline{g(z)}\right|>0.
\]

Conversely, assume that $\inf\limits_{z\in\D}|\phi(z)|>0$. Again, the inequality \eqref{Eq:Equivalence of g and sg+gBar} and Theorem \ref{Thm:analytic and coanalytic invertibility via Berezin transform} infers that $T_{g}$ is invertible on $L^2_a(\D)$. Finally, Proposition \ref{Prop: T iff invertible} yields that $T_{\phi}$ is invertible on $L^2_a(\D)$. This completes the proof.
\end{proof}

We now give a concrete example to supplement our results.

\begin{ex}
For	fix $t\in\R$ and consider the function 
\[
\phi(z)=\left(\frac{1+z}{1-z}\right)^{it} \quad (z\in\D).
\]
Clearly, $\phi\notin C(\overline{\D})$ and it is known that $\phi\in H^{\infty}(\D)$. Note that if $1+z=re^{i\theta}$ and $1-z=\rho e^{i\eta}$, where $0<r,\rho <2$  and 
$-\pi/2 < \theta, \rho < \pi/2$ then 
\[
\left|(1+z)^{it}\right|=\left| e^{it\log(1+z)} \right|=\left| e^{it(\log r+i\theta)} \right|=e^{-t\theta}\geq e^{-|t\theta|}\geq e^{-|t|\pi/2}>0.
\] 
Similarly,
\[
\left|(1-z)^{it}\right|=\left| e^{it\log(1-z)} \right|=\left| e^{it(\log \rho+i\eta)} \right|=e^{-t\eta}\geq e^{-|t\eta|}\geq e^{-|t|\pi/2}>0.
\]
Consequently, the Toeplitz operators $T_{(1+z)^{it}}$ and $T_{(1-z)^{it}}$ are invertible on $L^2_a(\D)$. Now we write 
\[
(1-z)^{it}\phi(z)=(1+z)^{it} \quad (z\in\D).
\]
Therefore,
\[
T_{(1+z)^{it}}T_{\phi}=T_{(1-z)^{it}}
\]
from which it follows that $T_{\phi}$ is invertible on $L^2_a(\D)$. In this case,
$\inf\limits_{z\in\D}|\phi(z)|>0$.
\end{ex}

\vspace{0.2 cm}

\end{document}